\subjclass[2000]{Primary 53D12; Secondary 53D42.}
\newtheorem{theorem}{Theorem}[section]
\newtheorem{lemma}[theorem]{Lemma}
\newtheorem{proposition}[theorem]{Proposition}
\theoremstyle{remark}
\newtheorem{remark}[theorem]{Remark}
\theoremstyle{example} \theoremstyle{definition}
\newtheorem{definition}[theorem]{Definition}
\newcommand{\C}{\mathbb{C}}
\newcommand{\R}{\mathbb{R}}
\newcommand{\Z}{\mathbb{Z}}
\renewcommand{\L}{\mathcal{L}}
\newcommand{\ve}{\varepsilon}
\numberwithin{equation}{section}
\title{A note on the front  spinning construction}
\author{Roman Golovko}
\address{Universit\'{e} Paris-Sud, D\'{e}partement de Math\'{e}matiques, Bat. 425, 91405
Orsay Cedex, France} \email{roman.golovko@math.u-psud.fr}
\date{\today}
\keywords{Legendrian submanifold, Lagrangian cobordism, Legendrian
contact homology}
\begin{document}

\maketitle

\begin{abstract}
In this paper we introduce a notion of front $S^m$-spinning for Legendrian submanifolds of $\R^{2n+1}$. It generalizes the notion of front $S^1$-spinning which was invented by Ekholm, Etnyre and Sullivan.
We use it to prove that there are infinitely many pairs of exact Lagrangian cobordant and not pairwise
Legendrian isotopic Legendrian $S^1\times S^{i_{1}}\times \dots \times S^{i_{k}}$ which  have the same classical invariants if one of $i_{j}$'s is odd.
\end{abstract}

\section{Introduction and Main Results}
\subsection*{Basic definitions}
\emph{Standard contact $(2n+1)$-dimensional space} is Euclidean space $\R^{2n+1}$ equipped with the completely non-integrable field of hyperplanes $\xi = \ker \alpha$, where $\alpha$ is the contact $1$-form $\alpha=dz-\sum_{i=1}^{n}y_{i}dx_{i}$ in Euclidean coordinates $(x_{1}, y_{1},\dots,x_{n},y_{n},z)$.
The \emph{Reeb vector field} $R_{\alpha}$ which corresponds to $\alpha=dz-\sum_{i=1}^{n}y_{i}dx_{i}$ is given by $R_{\alpha}=\frac{\partial}{\partial z}$. An immersion of an $n$-manifold into $\R^{2n+1}$ is \emph{Legendrian} if it is everywhere tangent to the hyperplane field $\xi$, and the image of a Legendrian embedding is a \emph{Legendrian submanifold}. The \emph{Reeb chords} of a Legendrian submanifold $\Lambda$ are segments of
flow lines of $R_{\alpha}$ starting and ending at points of $\Lambda$.    The \emph{symplectization} of $\R^{2n+1}$ is the exact symplectic manifold
$(\R\times\R^{2n+1}, d(e^{t}\alpha))$, where $t$ is a coordinate on $\R$.
There are two natural projections
\begin{align*}
&\Pi_{F}(x_{1}, y_{1},\dots , x_{n}, y_{n}, z)=(x_{1}, \dots, x_{n},z)\quad \mbox{and}\\
&\Pi_{L}(x_{1}, y_{1},\dots , x_{n}, y_{n}, z)=(x_{1}, y_{1},\dots, x_{n}, y_{n})
\end{align*}
that we call the \emph{front projection} and the \emph{Lagrangian projection}, respectively.
The Lagrangian projection
$\Pi_{L}(\Lambda)$ of a Legendrian submanifold $\Lambda$ is an exact Lagrangian immersion into $\R^{2n}$.
Note that in the generic situation, i.e. for $\Lambda$ in an open dense subset of all Legendrian submanifolds with $C^{\infty}$ topology,
the self-intersection of $\Pi_{L}(\Lambda)$ consists of a finite number of transverse double points. These points correspond to all Reeb
chords of $\Lambda$.
A Legendrian submanifold is called \emph{chord generic} if it has a finite number of Reeb chords.
From now on we assume that all Legendrian submanifolds of $\R^{2n+1}$ are connected and chord generic.

\subsection*{Classical and non-classical invariants of Legendrian submanifolds}
There are two classical invariants of a closed, orientable Legendrian submanifold $\Lambda\subset \R^{2n+1}$, namely the \emph{Thurston--Bennequin invariant} (number) and the \emph{rotation class}.

The Thurston--Bennequin invariant  was originally defined  by Bennequin, see~\cite{Bennequin}, and independently by Thurston for Legendrian knots in $\R^{3}$, and then was generalized to higher dimensions by Tabachnikov~\cite{Tabachnikov}. The Thurston--Bennequin number $tb(\Lambda)$ of a closed, oriented
Legendrian $\Lambda\subset \R^{2n+1}$  is the linking number $lk(\Lambda,\Lambda')$, where $\Lambda'$ is an oriented submanifold obtained from $\Lambda$ by a small shift in the direction of $R_{\alpha}$.

The rotation class $r(\Lambda)$ was defined by Ekholm, Etnyre and Sullivan for all $n\geq 1$ in \cite{EkholmEtnyreSullivan} and is equal to the homotopy class of $(f, df_{\C})$ in the space of complex fiberwise isomorphisms $T\Lambda\otimes \C \to \xi$, where $f:\Lambda\to \R^{2n+1}$ is an
embedding of $\Lambda$. Note that if $\Lambda=S^n$ and $n$ is odd, then $r(\Lambda)\in \pi_{n}(U(n))\simeq \Z$ and we call $r(\Lambda)$ the \emph{rotation number}.

\emph{Legendrian contact homology} is a non-classical invariant of a closed, orientable Legendrian submanifold of $\R^{2n+1}$.
It was constructed in \cite{Chekanov} for the case when $n=1$ and in \cite{EkholmEtnyreSullivan2} for all $n\geq 1$.
The Legendrian contact homology of a closed, orientable Legendrian submanifold $\Lambda$ with the finite set of Reeb chords $\mathcal C_{\Lambda}$ is the homology of the noncommutative differential graded algebra $(\mathcal A_{\Lambda}, \partial_{\Lambda})$ over $\Z_{2}$ freely generated by the elements of $\mathcal C_{\Lambda}$ and is denoted by  $LCH_{\ast}(\Lambda)$.  The differential $\partial_{\Lambda}$ counts holomorphic curves in the symplectisation of $\R^{2n+1}$ whose domains are disks with points removed on the boundary. At these points, the holomorphic curve has one positive asymptotic and several negative asymptotics. For more details we refer to \cite{EkholmEtnyreSullivan2}.

Note that $(\mathcal A_{\Lambda}, \partial_{\Lambda})$  and even its homology may be infinite dimensional and hence it is difficult to use it for practical applications.
One of the ways to extract useful information from $(\mathcal A_{\Lambda}, \partial_{\Lambda})$ is to follow Chekanov's method of linearization.
An augmentation $\ve$ is an algebra homomorphism from $(\mathcal A_{\Lambda}, \partial_{\Lambda})$ to  $(\Z_{2}, 0)$ which satisfies $\ve(1) = 1$ and $\ve\circ \partial_{\Lambda} = 0$ and allows us to linearize the differential graded algebra to a finite dimensional complex $LC^{\ve}:=(A_{\Lambda}, \partial^{\ve}_{1, \Lambda})$ with homology groups $LCH^{\ve}_{\ast}(\Lambda)$. Here $A_{\Lambda}$ is the vector space over $\Z_{2}$ generated by the elements of $\mathcal C_{\Lambda}$. We let $LCH^{\ast}_{\ve}(\Lambda)$ be the homology of the dual complex $LC_{\ve}(\Lambda):=Hom(LC^{\ve}(\Lambda),\Z_{2})$. The linearized homology (cohomology) groups may depend on the choice of $\ve$. However, the set of graded groups $\{ LCH^{\ve}_{\ast} (\Lambda) \}$ ($\{ LCH_{\ve}^{\ast} (\Lambda) \}$), where $\ve$ is any augmentation of $(\mathcal A_{\Lambda}, \partial_{\Lambda})$, provides a Legendrian isotopy invariant, see \cite{Chekanov}.
It is shown in Section 3 of~\cite{EkholmSFTF2} that an exact Lagrangian filling of $\Lambda$ induces an augmentation of its Legendrian contact homology differential graded algebra.

In \cite{EkholmEtnyreSullivan}, Ekholm, Etnyre and Sullivan used Legendrian contact homology  to prove that for any $n>1$ there is an infinite family of Legendrian embeddings of the $n$-sphere into $\R^{2n+1}$ that are not Legendrian isotopic even though they have the same classical invariants. They also prove  similar results for Legendrian surfaces and $n$-tori, see \cite{EkholmEtnyreSullivan}. These results indicate that the theory of Legendrian submanifolds of standard contact $(2n+1)$-space is very rich.

\subsection*{Main results}
Observe that the family of $n$-tori from \cite{EkholmEtnyreSullivan} is constructed using the front $S^1$-spinning, which is a procedure to produce a closed, orientable Legendrian submanifold $\Sigma_{S^1} \Lambda\subset\R^{2n+3}$ from a closed, orientable Legendrian submanifold $\Lambda\subset\R^{2n+1}$. In Section~\ref{constrsmfrspinfd} we introduce a notion of front $S^m$-spinning. It generalizes the $S^1$-spinning construction and produces a closed, orientable Legendrian submanifold $\Sigma_{S^m} \Lambda\subset\R^{2(n+m)+1}$ from a closed, orientable Legendrian submanifold $\Lambda\subset\R^{2n+1}$. We have to mention that the special case of the ``higher dimensional spinning''
construction has already appeared in the work of Ekholm, Etnyre and Sabloff, see \cite{EkholmEtnyreSabloff}. In Section~\ref{sectbehcoblend} we investigate the behavior of the front $S^{m}$-spinning under the relation of an embedded Lagrangian cobordism.

Observe that a closed, orientable Legendrian submanifold $\Lambda \subset \R^{2n+1} $  gives rise to an exact Lagrangian cylinder $C(\Lambda) = \R \times \Lambda\subset (\R\times \R^{2n+1}, d(e^t\alpha))$.

\begin{definition}
A \emph{Lagrangian} (an \emph{exact Lagrangian}) \emph{cobordism} $L$ between two closed, orientable Legendrian submanifolds $\Lambda_{-}, \Lambda_{+}\subset \R^{2n+1}$ is an embedded Lagrangian (exact Lagrangian) submanifold in the symplectization of $\R^{2n+1}$ so that $L$ agrees with $C(\Lambda_{-})$
for $t \leq -T_{L}$, with $C(\Lambda_{+})$  for $t \geq T_{L}$, $L^{c}:=L|_{[-T_{L}-1,T_{L}+1]\times \R^{2n+1}}$ is compact for some $T_{L}\gg 0$ and we write $\Lambda_{-}\prec^{lag}_{L}\Lambda_{+}$ ($\Lambda_{-}\prec^{ex}_{L}\Lambda_{+}$). We will in general not distinguish between $L$ and $L^{c}$ and call both $L$. In the case when $\Lambda_{-}=\emptyset$, i.e. when $\Lambda_{+}$ has a Lagrangian (an exact Lagrangian) filling,   we write $\emptyset\prec^{lag}_{L}\Lambda_{+}$ ($\emptyset\prec^{ex}_{L}\Lambda_{+}$).
\end{definition}
From now on we assume that all Lagrangian cobordisms in the symplectization of $\R^{2n+1}$ are orientable and connected.

\begin{remark}
Observe that in the definition of an exact Lagrangian cobordism $L$ one usually requires  that
there is $f:L\to \R$ such that
$e^{t}\alpha|_{L} = df$ and $f_{-} := f|_{(-\infty,-T_{L}]\times \Lambda_{-}}$, $f_{+}:=f|_{[T_{L},\infty)\times \Lambda_{+}}$ are constant functions, see \cite{Chantraine2} and \cite{EkholmHondaKalman}.
In our settings, this condition is automatically satisfied because we consider only connected exact Lagrangian cobordisms with connected positive and negative ends.
\end{remark}

Our goal is to prove the following result:
\begin{proposition}\label{liftofcobordismsexandjustlagr}
Let $\Lambda_{-},\Lambda_{+}$ be two closed, orientable Legendrian submanifolds of $\R^{2n+1}$. If $\Lambda_{-}\prec_{L}^{lag}\Lambda_{+}$, then there exists a Lagrangian cobordism $\Sigma_{S^m} L$ such that
$\Sigma_{S^m} \Lambda_{-}\prec_{\Sigma_{S^m} L}^{lag}\Sigma_{S^m} \Lambda_{+}$. In addition, if $\Lambda_{-}\prec_{L}^{ex}\Lambda_{+}$, then there exists an exact Lagrangian cobordism $\Sigma_{S^m} L$ such that
$\Sigma_{S^m} \Lambda_{-}\prec_{\Sigma_{S^m} L}^{ex}\Sigma_{S^m} \Lambda_{+}$.
\end{proposition}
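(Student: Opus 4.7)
The plan is to construct $\Sigma_{S^m} L$ by applying the $S^m$-spinning of Section~\ref{constrsmfrspinfd} $t$-slice by $t$-slice to $L\subset\R\times\R^{2n+1}$, and then to check three things: that the result is Lagrangian, that it is exact when $L$ is, and that its cylindrical ends are $\Sigma_{S^m}\Lambda_\pm$. First I would reduce to the case $x_1>0$ on all of $L$. Translation in the $x_1$-direction preserves $\alpha=dz-\sum y_i dx_i$, so it is a strict contactomorphism of $(\R^{2n+1},\xi)$ and lifts to a symplectomorphism of $(\R\times\R^{2n+1},d(e^t\alpha))$; composing $L$ with such a lift carries (exact) Lagrangian cobordisms to (exact) Lagrangian cobordisms, and by compactness of $L^c$ and of $\Lambda_\pm$ a single translation places all of $L$ into $\{x_1>0\}$. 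I then parametrize $\Sigma_{S^m}L$ by $L\times S^m$ via the map $\Phi$ that replaces the coordinates $(x_1,y_1)$ of $l\in L$ by the $m+1$ pairs $(x_1\theta_j,y_1\theta_j)$, for $\theta=(\theta_1,\ldots,\theta_{m+1})\in S^m$, and leaves $t$, $(x_i,y_i)$ for $i\geq 2$, and $z$ unchanged, mirroring the Legendrian definition.

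The central computation is the pullback identity $\Phi^*\alpha_m = \pi_L^*(\iota_L^*\pi^*\alpha)$, where $\alpha_m = dz - \sum_{j=1}^{m+1} y_1^j dx_1^j - \sum_{i=2}^n y_i dx_i$ denotes the ambient contact form on $\R^{2(n+m)+1}$, $\iota_L\colon L\hookrightarrow \R\times\R^{2n+1}$ is the inclusion, $\pi\colon \R\times\R^{2n+1}\to\R^{2n+1}$ is the projection, and $\pi_L\colon L\times S^m\to L$. This follows from the expansion $\sum_j (y_1\theta_j)\,d(x_1\theta_j) = y_1\,dx_1\cdot\sum_j\theta_j^2 + y_1 x_1\cdot\sum_j\theta_j\,d\theta_j$ together with the standard identities $\sum_j\theta_j^2=1$ and $\sum_j\theta_j\,d\theta_j=0$ on $S^m$. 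Applying $d(e^t\cdot)$ and using that $L$ is Lagrangian, i.e.\ $d(e^t\iota_L^*\pi^*\alpha)=0$, yields $\Phi^*d(e^t\alpha_m)=0$, proving that $\Sigma_{S^m}L$ is Lagrangian. If in addition $e^t\iota_L^*\pi^*\alpha = df$ on $L$, the same identity gives $\Phi^*(e^t\alpha_m)=d(f\circ\pi_L)$, so $\Sigma_{S^m}L$ is exact.

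The cylindrical ends come for free: since $\Phi$ preserves the $t$-coordinate and, slicewise in $t$, reduces to the Legendrian spinning, for $|t|\geq T_L$ it sends $\{t\}\times\Lambda_\pm$ to $\{t\}\times\Sigma_{S^m}\Lambda_\pm$, giving $\Sigma_{S^m}\Lambda_-\prec^{lag}_{\Sigma_{S^m}L}\Sigma_{S^m}\Lambda_+$ and, analogously, the relation $\prec^{ex}$ in the exact setting. I expect the main obstacle to be simply the bookkeeping behind the pullback identity for $\Phi^*\alpha_m$; all the geometric content of the argument --- that spinning preserves the vanishing of the contact form --- is already present in the Legendrian case, and the extension to a cobordism in the symplectization amounts to carrying out that computation with $t$ as a free parameter.
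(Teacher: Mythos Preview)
Your approach is essentially the same as the paper's: translate so that $x_1>0$ on $L$, spin $t$-slicewise, verify the Lagrangian condition via the pullback identity for the contact form, and pull back the primitive for exactness; your use of Cartesian coordinates $\theta_j$ on $S^m$ (with $\sum\theta_j^2=1$, $\sum\theta_j\,d\theta_j=0$) is merely a cleaner packaging of what the paper calls ``simple trigonometric identities''. The one item missing from your checklist is that $\Phi$ is an embedding, which the paper's definition of Lagrangian cobordism requires and which the paper does verify (albeit with ``one easily sees''); this is routine once $x_1>0$, since $x_1=\bigl(\sum_j(x_1\theta_j)^2\bigr)^{1/2}$ recovers $x_1$ and hence $\theta$ and $l$ from $\Phi(l,\theta)$.
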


We then use the fact about the relation between the linearized Legendrian contact cohomology of a Legendrian submanifold of $\R^{2n+1}$ and the singular homology of its exact Lagrangian filling described by Ekholm in~\cite{Ekholm}, proven for $n=1$ by Ekholm, Honda and Kalman in~\cite{EkholmHondaKalman} and for all $n$ by Dimitroglou Rizell in~\cite{Rizell0},
and prove the following:

\begin{proposition}\label{legnotisotspinunspinhom}
Let $\Lambda_{-}$ and  $\Lambda_{+}$ be two closed, orientable Legendrian submanifolds of  $\R^{2n+1}$ such that $\emptyset\prec^{ex}_{L_{\Lambda_{-}}} \Lambda_{-}$ and
$\Lambda_{-}\prec^{ex}_{L} \Lambda_{+}$ with $\dim (H_{i}(L;\Z_{2}))>\dim(H_{i}(\Lambda_{-}; \Z_{2}))$ for some $i$. Then
\begin{itemize}
\item[(1)] $\Lambda_{-}$ is not Legendrian isotopic to $\Lambda_{+}$,
\item[(2)]$\Sigma_{S^{i_{k}}}\dots \Sigma_{S^{i_{1}}} \Lambda_{-}$ is not Legendrian isotopic to $\Sigma_{S^{i_{k}}}\dots \Sigma_{S^{i_{1}}} \Lambda_{+}$ for $i_{1},\dots,i_{k}
\geq i$, where $i$ is the smallest number such that $\dim (H_{i}(L;\Z_{2}))>\dim(H_{i}(\Lambda_{-}; \Z_{2}))$.
\end{itemize}
\end{proposition}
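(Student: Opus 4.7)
The plan for (1) is to argue by contradiction: supposing $\Lambda_-$ and $\Lambda_+$ are Legendrian isotopic I will manufacture a sequence of exact Lagrangian fillings $F_k$ of $\Lambda_-$ whose $\Z_2$-Betti number in degree $i$ grows linearly in $k$. Through the Ekholm--Dimitroglou Rizell isomorphism between the linearized contact cohomology of $\Lambda_-$ (with a filling-induced augmentation) and the singular $\Z_2$-homology of the filling, this forces $\dim LCH^{n-i}_{\ve_k}(\Lambda_-)\to\infty$, contradicting the universal upper bound $\dim LCH^{n-i}_\ve(\Lambda_-)\leq |\mathcal C_{\Lambda_-}|<\infty$ valid for every augmentation $\ve$ since $\Lambda_-$ is chord generic.

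The construction proceeds in two steps. First, concatenating $L_{\Lambda_-}$ with $L$ in the symplectization yields an exact filling $L_{\Lambda_-}\cup L$ of $\Lambda_+$, and the Mayer--Vietoris sequence with $\Z_2$-coefficients along $\Lambda_-$ gives $\dim H_i(L_{\Lambda_-}\cup L;\Z_2)>\dim H_i(L_{\Lambda_-};\Z_2)$ via the assumption $\dim H_i(L;\Z_2)>\dim H_i(\Lambda_-;\Z_2)$. Second, assuming $\Lambda_-$ is Legendrian isotopic to $\Lambda_+$, the trace of the reverse isotopy produces an exact Lagrangian cylinder $T:\Lambda_+\to\Lambda_-$ with $H_*(T;\Z_2)\cong H_*(\Lambda_-;\Z_2)$, so $L\cup T$ is an exact Lagrangian self-cobordism of $\Lambda_-$ satisfying $H_*(L\cup T;\Z_2)\cong H_*(L;\Z_2)$. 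Setting $L_k:=(L\cup T)^{\circ k}:\Lambda_-\to\Lambda_-$ and iterating Mayer--Vietoris,
\[\dim H_i(L_k;\Z_2)\geq k\bigl(\dim H_i(L;\Z_2)-\dim H_i(\Lambda_-;\Z_2)\bigr)+\dim H_i(\Lambda_-;\Z_2),\]
so attaching $L_{\Lambda_-}$ to the negative end yields exact fillings $F_k:=L_{\Lambda_-}\cup L_k$ of $\Lambda_-$ with $\dim H_i(F_k;\Z_2)\to\infty$. Applying Ekholm--Dimitroglou Rizell to each $F_k$ produces augmentations $\ve_k$ of $(\mathcal A_{\Lambda_-},\partial_{\Lambda_-})$ with $\dim LCH^{n-i}_{\ve_k}(\Lambda_-)=\dim H_i(F_k;\Z_2)\to\infty$, the desired contradiction.

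For (2) the plan is to spin-lift everything and reduce to (1). Writing $\Sigma:=\Sigma_{S^{i_k}}\circ\cdots\circ\Sigma_{S^{i_1}}$, iterated application of Proposition~\ref{liftofcobordismsexandjustlagr} gives an exact filling $\Sigma L_{\Lambda_-}$ of $\Sigma\Lambda_-$ and an exact cobordism $\Sigma L$ from $\Sigma\Lambda_-$ to $\Sigma\Lambda_+$. Since $\Sigma_{S^m}$ is, as a smooth manifold, the product with $S^m$, a $\Z_2$-Künneth computation in which the hypothesis $i_j\geq i$ annihilates all summands $H_{i-\sum_{\ell\in S}i_\ell}(X)$ with $|S|\geq 2$ yields, for any connected $X\in\{\Lambda_-,L\}$,
\[\dim H_i(\Sigma X;\Z_2)=\dim H_i(X;\Z_2)+|\{\ell : i_\ell=i\}|.\]
Subtracting shows that the hypothesis of (1) holds verbatim for $(\Sigma\Lambda_-,\Sigma\Lambda_+,\Sigma L,\Sigma L_{\Lambda_-})$ at the same degree $i$, so applying (1) finishes the proof.

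The step I expect to be the main technical hurdle is upgrading the abstract iterated concatenation $L_k$ to a \emph{genuine} exact Lagrangian self-cobordism of $\Lambda_-$ in the symplectization, with compatible cylindrical ends, so that the Ekholm--Dimitroglou Rizell theorem applies to each $F_k$; once the gluing is in place, the Mayer--Vietoris estimates and the Reeb-chord upper bound on $\dim LCH^{n-i}_\ve$ are routine.
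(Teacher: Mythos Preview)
Your argument for (1) is correct and uses the same toolkit as the paper---Mayer--Vietoris along $\Lambda_-$, the Ekholm/Dimitroglou Rizell isomorphism, the Reeb-chord bound on linearized cohomology, and the isotopy-trace cylinder---but packages it differently. The paper introduces the set $\mathcal H^{geom}_\Lambda=\{(\dim H_i(L_\Lambda;\Z_2))_i:L_\Lambda\in\mathcal L(\Lambda)\}$ as a Legendrian invariant, observes that the Reeb-chord bound forces the $i$-th coordinate to attain a maximum at some $L^{max}_{\Lambda_-}$, and then shows that $L^{max}_{\Lambda_-}\cup L$ furnishes an element of $\mathcal H^{geom}_{\Lambda_+}$ strictly exceeding that maximum. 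You instead iterate the self-cobordism $L\cup T$ to drive $\dim H_i(F_k)\to\infty$, contradicting the same bound. The two arguments are logically equivalent; yours is slightly more direct, while the paper's names a reusable invariant. The gluing you flag as the main hurdle is in fact standard (concatenation of exact cobordisms with matching cylindrical ends in the symplectization), and the paper does not comment on it.

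For (2) your K\"unneth reduction matches the paper's exactly, but you have skipped a step the paper treats carefully: the spun Legendrians $\Sigma\Lambda_\pm$ are \emph{not} chord generic---each Reeb chord of $\Lambda_\pm$ spins to an $S^{i_1}\times\cdots\times S^{i_k}$-family of chords---so part (1), whose contradiction rests on the finiteness of $|\mathcal C_{\Lambda_-}|$, does not apply to $\Sigma\Lambda_-$ as stated. The paper fixes this by passing to chord-generic representatives $(\Sigma\Lambda_\pm)_{gen}$ in the same Legendrian isotopy classes and inserting the exact Lagrangian cylinders furnished by those isotopies on either side of $\Sigma L$ before invoking (1). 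This is routine but should be said.
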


In Section~\ref{genfamaltappr} we say a few words about the way to get a variant of Proposition~\ref{legnotisotspinunspinhom} using the theory of generating families. For the basic definitions of this theory
we refer to \cite{SabloffTraynor}.

We apply Propositions~\ref{liftofcobordismsexandjustlagr} and \ref{legnotisotspinunspinhom} to a certain family of Legendrian knots and get the following theorem:
\begin{theorem}\label{thexamplessisposdifornotnlis}
There are infinitely many pairs of exact Lagrangian cobordant and not pairwise
Legendrian isotopic  Legendrian $S^1\times S^{i_{1}}\times \dots \times S^{i_{k}}$ in $\R^{2(\sum^{k}_{j=1}i_{j}+1)+1}$ which  have the same classical invariants if one of $i_{j}$'s is odd.
\end{theorem}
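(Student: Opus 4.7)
The plan is to build the desired Legendrian products by iteratively front-spinning a suitable infinite family of pairs of Legendrian knots in $\R^{3}$, using Propositions~\ref{liftofcobordismsexandjustlagr} and~\ref{legnotisotspinunspinhom}. I would begin by exhibiting an infinite sequence of pairs $\{(\Lambda_{-}^{(N)},\Lambda_{+}^{(N)})\}_{N\geq 1}$ of Legendrian knots in $\R^{3}$, pairwise non-isotopic, such that for every $N$: (i) $\Lambda_{-}^{(N)}$ bounds an exact Lagrangian filling; (ii) there is an exact Lagrangian cobordism $L^{(N)}$ with $\Lambda_{-}^{(N)}\prec^{ex}_{L^{(N)}}\Lambda_{+}^{(N)}$; (iii) $\dim H_{1}(L^{(N)};\Z_{2})>\dim H_{1}(\Lambda_{-}^{(N)};\Z_{2})=1$; (iv) $\Lambda_{-}^{(N)}$ and $\Lambda_{+}^{(N)}$ share the Thurston--Bennequin number and have rotation numbers that agree modulo $2$. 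Such pairs can be produced with standard tools: start from a Legendrian knot admitting a disk filling and attach suitable Lagrangian $1$-handle cobordisms (as in the Bourgeois--Sabloff--Traynor and Ekholm--Honda--K\'{a}lm\'{a}n machinery) to realize top ends with controlled classical invariants while freely increasing the first Betti number of the cobordism; varying the combinatorial pattern of the handles supplies pairwise non-isotopic top ends.

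Next, I would apply Proposition~\ref{liftofcobordismsexandjustlagr} iteratively with the prescribed spheres $S^{i_{1}},\dots,S^{i_{k}}$. This produces, for each $N$, an exact Lagrangian cobordism from $\Sigma_{S^{i_{k}}}\cdots\Sigma_{S^{i_{1}}}\Lambda_{-}^{(N)}$ to $\Sigma_{S^{i_{k}}}\cdots\Sigma_{S^{i_{1}}}\Lambda_{+}^{(N)}$, both diffeomorphic to $S^{1}\times S^{i_{1}}\times\dots\times S^{i_{k}}$ and sitting inside $\R^{2(\sum_{j}i_{j}+1)+1}$. Since the smallest bad index in (iii) is $i=1$ and every $i_{j}\geq 1$, Proposition~\ref{legnotisotspinunspinhom}(2) applies and shows that within each spun pair the two Legendrians are not Legendrian isotopic. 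Pairwise non-isotopy across different $N$ should follow from the pairwise non-isotopy of the unspun knots together with the observation that the linearized contact cohomology groups entering the proof of Proposition~\ref{legnotisotspinunspinhom} are inherited by the spun Legendrians and continue to distinguish the corresponding classes.

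Finally, I would verify that in each spun pair the two Legendrians carry the same classical invariants. The Thurston--Bennequin number of a front-spun Legendrian is a combinatorial function of the front of the base Legendrian and of the spinning dimensions, hence is preserved by the pairwise matching in (iv). The rotation class is the subtle point and is precisely where the oddness assumption enters: front-spinning by an odd-dimensional sphere acts on the Lagrangian Gauss map in such a way that the $\Z$-valued rotation number of the base knot contributes only modulo $2$ to the spun rotation class, so the mod~$2$ congruence in (iv) is already sufficient to force the two spun rotation classes to coincide as soon as some $i_{j}$ is odd. I expect this last bookkeeping of the rotation class under iterated front-spinning to be the main obstacle, alongside the verification that the invariants from Proposition~\ref{legnotisotspinunspinhom} continue to separate the spun pairs as $N$ varies.
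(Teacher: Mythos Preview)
Your plan contains a genuine inconsistency at the level of the base knots. You require in (iv) that $\Lambda_{-}^{(N)}$ and $\Lambda_{+}^{(N)}$ share the same Thurston--Bennequin number, while in (iii) you require $\dim H_{1}(L^{(N)};\Z_{2})>1$. But for a connected exact Lagrangian cobordism $L$ between knots in $\R^{3}$, Chantraine's formula gives $tb(\Lambda_{+})-tb(\Lambda_{-})=-\chi(L)=2g(L)$; if the Thurston--Bennequin numbers agree then $g(L)=0$, $L$ is a cylinder, and $\dim H_{1}(L;\Z_{2})=1$. So (iii) and (iv) cannot hold simultaneously, and the rest of your argument never gets off the ground.

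The paper avoids this trap by \emph{not} asking the base knots to have equal $tb$. It uses the Legendrian torus knots $T_{2k+1}$, all with rotation number $0$ but with $tb(T_{2k+1})=2k-1$ strictly increasing, and the cobordisms $L^{2k+1}_{2j+1}$ have genus $k-j>0$, so the $H_{1}$ hypothesis of Proposition~\ref{legnotisotspinunspinhom} is satisfied. The matching of classical invariants after spinning comes from a separate computation (Lemma~5.1 in the paper): the rotation class of $\Sigma_{S^{m}}\Lambda$ is determined by that of $\Lambda$ (so equal rotation numbers downstairs give equal rotation classes upstairs, with no mod~$2$ subtlety), and $tb(\Sigma_{S^{m}}\Lambda)=0$ whenever $m$ is odd, regardless of $tb(\Lambda)$. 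Thus the oddness of some $i_{j}$ kills the Thurston--Bennequin discrepancy outright rather than, as you guessed, acting on the rotation class. Your bookkeeping of both classical invariants under spinning is off, and fixing it leads naturally to the paper's approach.
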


\section{Construction}\label{constrsmfrspinfd}

In this section we define a notion of front $S^m$-spinning. It is a natural generalization of the front $S^1$-spinning invented by Ekholm, Etnyre and Sullivan in~\cite{EkholmEtnyreSullivan}.

Let $\Lambda$ be a closed, orientable Legendrian submanifold of $\R^{2n+1}$ parameterized by $f_{\Lambda}:\Lambda\to \R^{2n+1}$ with
\begin{align*}
f_{\Lambda}(p)=(x_{1}(p),y_{1}(p),\dots,x_{n}(p),y_{n}(p),z(p))
\end{align*}
for $p\in \Lambda$. Without loss of generality assume that $x_{1}(p)>0$ for all $p$.



We define $\Sigma_{S^m} \Lambda$ to be the Legendrian submanifold of $\R^{2(m+n)+1}$ parameterized by
$f_{\Sigma_{S^m} \Lambda}:\Lambda\times S^{m}\to \R^{2(n+m)+1}$ with
\begin{align*}
f_{\Sigma_{S^m} \Lambda}(p,\theta,\overline{\phi})=
(\tilde{x}_{-m+1}(p,\theta,\overline{\phi}),\tilde{y}_{-m+1}(p,\theta,\overline{\phi})\dots,\tilde{x}_{1}(p,\theta,\overline{\phi}),\tilde{y}_{1}(p,\theta,\overline{\phi}),x_{2}(p),\dots,z(p)),
\end{align*}
where
\begin{align}\label{longxs}
\left \{
\begin{array}{l}
\tilde{x}_{-m+1}(p,\theta,\overline{\phi})=x_{1}(p)\sin\theta\sin\phi_{1}\dots\sin\phi_{m-1},\\
\tilde{x}_{-m+2}(p,\theta,\overline{\phi})=x_{1}(p)\cos\theta\sin\phi_{1}\dots\sin\phi_{m-1},\\
\tilde{x}_{-m+3}(p,\theta,\overline{\phi})=x_{1}(p)\cos\phi_{1}\dots\sin\phi_{m-1},\\
\dots\\
\tilde{x}_{1}(p,\theta,\overline{\phi})=x_{1}(p)\cos\phi_{m-1},
\end{array}
\right.
\end{align}
\begin{align}\label{longys}
\left \{
\begin{array}{l}
\tilde{y}_{-m+1}(p,\theta,\overline{\phi})=y_{1}(p)\sin\theta\sin\phi_{1}\dots\sin\phi_{m-1},\\
\tilde{y}_{-m+2}(p,\theta,\overline{\phi})=y_{1}(p)\cos\theta\sin\phi_{1}\dots\sin\phi_{m-1},\\
\tilde{y}_{-m+3}(p,\theta,\overline{\phi})=y_{1}(p)\cos\phi_{1}\dots\sin\phi_{m-1},\\
\dots\\
\tilde{y}_{1}(p,\theta,\overline{\phi})=y_{1}(p)\cos\phi_{m-1},
\end{array}
\right.
\end{align}
$\theta\in [0,2\pi)$ and $\overline{\phi}=(\phi_{1},\dots,\phi_{m-1})\in [0,\pi]^{m-1}$.

Since $\Lambda$ is a Legendrian submanifold of $\R^{2n+1}$ and hence $f_{\Lambda}^{\ast}(dz-\sum_{i=1}^{n}y_{i}dx_{i})=0$, we use Formulas~\ref{longxs} and \ref{longys} and  see that
\begin{align*}
f_{\Sigma_{S^m} \Lambda}^{\ast}(dz-\sum_{i=-m+1}^{n}y_{i}dx_{i})=0.
\end{align*}
Since $f_{\Lambda}(p)=(x_{1}(p),\dots,y_{n}(p),z(p))$,
where $p\in \Lambda$, is a parametrization of an embedded $n$-dimensional submanifold and $x_{1}(p)>0$ for all $p\in \Lambda$, we easily see that
\begin{align}\label{parammends}
f_{\Sigma_{S^m} \Lambda}(p,\theta,\overline{\phi})=
(\tilde{x}_{-m+1}(p,\theta,\overline{\phi}),\tilde{y}_{-m+1}(p,\theta,\overline{\phi}),\dots,\tilde{y}_{1}(p,\theta,\overline{\phi}),x_{2}(p),\dots,z(p))
\end{align}
with $\theta\in [0,2\pi)$, $\overline{\phi}=(\phi_{1},\dots,\phi_{m-1})\in [0,\pi]^{m-1}$ is a parametrization of an embedded $(n+m)$-dimensional submanifold.

\section{Proof of Proposition~\ref{liftofcobordismsexandjustlagr}}\label{sectbehcoblend}
Here we prove Proposition~\ref{liftofcobordismsexandjustlagr} by mimicking the proof of Proposition 1.5 from~\cite{Golovko}.

Given two closed, orientable Legendrian submanifolds $\Lambda_{-}, \Lambda_{+}\subset\R^{2n+1}$ such that
\begin{align}\label{poslegtotransl}
\Lambda_{\pm}\subset \{(x_{1},y_{1},\dots,x_{n},y_{n},z)\in \R^{2n+1}\ |\ x_{1}>0\}
\end{align}
and
$\Lambda_{-}\prec^{lag}_{L}\Lambda_{+}$. Let $f_{L}:L\to \R^{2n+2}$ be a parametrization of $L$
 with
\begin{align}\label{paramnoflstr}
f_{L}(p)=(t(p),x_{1}(p),y_{1}(p),\dots,x_{n}(p),y_{n}(p),z(p)),
\end{align}
where $p\in L$.
Assume that $x_{1}(p)>0$ for all $p$ (Formula~\ref{poslegtotransl} implies that $\{f_{L}(p)\ |\ x_{1}(p)\leq 0\}$ is compact and hence can be translated in such a way that $x_{1}(p)>0$ for all $p$).
We now construct a Lagrangian cobordism $\Sigma_{S^{m}} L$ from $\Sigma_{S^{m}} \Lambda_{-}$ to $\Sigma_{S^{m}} \Lambda_{+}$. Define $\Sigma_{S^{m}} L$ to be parametrized by
$f_{\Sigma_{S^{m}} L}: L\times S^m\to \R\times \R^{2(n+m)+1}$ with
\begin{align*}
f_{\Sigma_{S^{m}} L}(p,\theta,\overline{\phi})=(t(p),\tilde{x}_{-m+1}(p,\theta,\overline{\phi}),\tilde{y}_{-m+1}(p,\theta,\overline{\phi}),\dots,\tilde{x}_{1}(p,\theta,\overline{\phi}),\tilde{y}_{1}(p,\theta,\overline{\phi}),x_{2}(p),\dots,z(p)),
\end{align*}
where $\tilde{x}_{i}$'s and $\tilde{y}_{i}$'s are defined by Formulas~\ref{longxs} and \ref{longys} for $x_{i}$'s and $y_{i}$'s from Formula \ref{paramnoflstr}, $p\in L$, $\theta\in [0,2\pi)$ and $\overline{\phi}=(\phi_{1},\dots,\phi_{m-1})\in [0,\pi]^{m-1}$.

Here we show that $\Sigma_{S^{m}} L$ is a Lagrangian cobordism from $\Sigma_{S^m} \Lambda_{-}$ to $\Sigma_{S^m} \Lambda_{+}$. We first note that $f_{\Sigma_{S^{m}} L}(L\times S^m)\cap \{t=t_{0}\}$ for $t_{0}\geq T_{L}$ (or $t_{0}\leq -T_{L}$) can be parametrized by $f_{\Sigma_{S^{m}}\Lambda_{+}}(p,\theta,\overline{\phi})$ (or $f_{\Sigma_{S^{m}}\Lambda_{-}}(p,\theta,\overline{\phi})$) for $f_{\Sigma_{S^{m}}\Lambda_{\pm}}(p,\theta,\overline{\phi})$ from Formula~\ref{parammends}, where $p\in \Lambda_{+}\subset \partial L^{c}$ (or $p\in \Lambda_{-}\subset \partial L^{c}$), $\theta\in [0,2\pi)$ and $\overline{\phi}=(\phi_{1},\dots,\phi_{m-1})\in [0,\pi]^{m-1}$. In addition, from the fact that $L^{c}$ is compact it follows that $\Sigma_{S^{m}} L^{c}$ is also compact. It remains to prove that the cobordism $\Sigma_{S^{m}}L$ is an embedded Lagrangian cobordism.

Since since $L$ is a Lagrangian cobordism and hence
\begin{align*}
f_{L}^{\ast}(d(e^t(dz-\sum^{n}_{i=1}y_{i}dx_{i})))=0
\end{align*}
we use simple trigonometric identities and  get that
\begin{align}\label{pullbsympln}
f_{\Sigma_{S^{m}} L}^{\ast}(d(e^t(dz-\sum^{n}_{i=-m+1}y_{i}dx_{i})))=0.
\end{align}

Since $f_{L}(p)=(t(p),x_{1}(p),y_{1}(p),\dots,x_{n}(p),y_{n}(p),z(p))$,
where $p\in L$, is a parametrization of an embedded $(n+1)$-dimensional cobordism and $x_{1}(p)>0$ for $p\in L$, one easily sees that
\begin{align*}
f_{\Sigma_{S^{m}} L}(p,\theta,\overline{\phi})=(t(p),\tilde{x}_{-m+1}(p,\theta,\overline{\phi}),\tilde{y}_{-m+1}(p,\theta,\overline{\phi}),\dots,\tilde{x}_{1}(p,\theta,\overline{\phi}),\tilde{y}_{1}(p,\theta,\overline{\phi}),x_{2}(p),\dots,z(p)),
\end{align*}
where $p\in L$, $\theta\in [0,2\pi)$ and $\overline{\phi}=(\phi_{1},\dots,\phi_{m-1})\in [0,\pi]^{m-1}$, is a parametrization of an embedded $(n+m+1)$-dimensional cobordism. Thus we use Formula~\ref{pullbsympln} and get that $\Sigma_{S^{m}} L$ is really an embedded Lagrangian cobordism from $\Sigma_{S^{m}} \Lambda_{-}$ to $\Sigma_{S^{m}} \Lambda_{+}$.

We now assume that $\Lambda_{-}\prec^{ex}_{L}\Lambda_{+}$. Then there exists a function $h_{L}\in C^{\infty}(f_{L}(L),\R)$ such that
\begin{align*}
dh_{L}=e^t(dz-\sum^{n}_{i=1}y_{i}dx_{i}).
\end{align*}
Since $f_{\Sigma_{S^{m}} L}$ is an embedding, we can define $h_{\Sigma_{S^m} L}\in C^{\infty}(f_{\Sigma_{S^m} L}(\Sigma_{S^m} L),\R)$ by setting
\begin{align*}
(f_{\Sigma_{S^m} L}^{\ast}h_{\Sigma_{S^m} L})(p,\theta,\overline{\phi}):=(f_{L}^{\ast}h_{L})(p).
\end{align*}
Observe that
\begin{align*}
f_{\Sigma_{S^m}L}^{\ast}(e^t(dz-\sum^{n}_{i=-m+1}y_{i}dx_{i}))=e^{t(p)}(dz(p)-\sum^{n}_{i=1}y_{i}(p)dx_{i}(p)).
\end{align*}
Therefore we get that
\begin{align}\label{exactnesslongformpushed}
d(f_{\Sigma_{S^m} L}^{\ast}h_{\Sigma^{m} L})=e^{t(p)}(dz(p)-\sum^{n}_{i=1}y_{i}(p)dx_{i}(p))=f_{\Sigma_{S^m} L}^{\ast}(e^t(dz-\sum^{n}_{i=-m+1}y_{i}dx_{i})).
\end{align}
$f_{\Sigma_{S^m} L}$ is an embedding and hence Formula~\ref{exactnesslongformpushed} implies that
\begin{align*}
d(h_{\Sigma_{S^m} L})=e^t(dz-\sum^{n}_{i=-m+1}y_{i}dx_{i}).
\end{align*}
Hence $\Sigma_{S^{m}} L$ is an exact Lagrangian cobordism.
This finishes the proof of Proposition~\ref{liftofcobordismsexandjustlagr}.

Observe that the proof of Proposition~\ref{liftofcobordismsexandjustlagr} can be easily transformed to become a proof of the following remark:
\begin{remark}\label{thmfiltb}
Given a  closed, orientable Legendrian submanifold $\Lambda\subset\R^{2n+1}$. If $\emptyset\prec_{L_{\Lambda}}^{lag}\Lambda$ ($\emptyset\prec_{L_{\Lambda}}^{ex}\Lambda$), then there exists a Lagrangian (an exact Lagrangian) filling $L_{\Sigma_{S^m}\Lambda}$ such that
$\emptyset \prec_{L_{\Sigma_{S^m}\Lambda}}^{lag}\Sigma_{S^m} \Lambda$ ($\emptyset\prec_{L_{\Sigma_{S^m}\Lambda}}^{ex}\Sigma_{S^m} \Lambda$).
\end{remark}

\section{Proof of Proposition~\ref{legnotisotspinunspinhom}}
Let $\Lambda_{-}$ and $\Lambda_{+}$ be two closed, orientable Legendrian submanifolds of $\R^{2n+1}$ with $\emptyset\prec^{ex}_{L_{\Lambda_{-}}}\Lambda_{-}$, $\Lambda_{-}\prec_{L}^{ex} \Lambda_{+}$ and
\begin{align}\label{diffdimshicobandendleg}
\dim(H_{i}(L; \Z_{2}))>\dim(H_{i}(\Lambda_{-}; \Z_{2}))
\end{align}
for some $i$.

We first construct an exact Lagrangian filling of $\Lambda_{+}$.
Since $\Lambda_{-}$ is connected, and $L$, $L_{\Lambda_{-}}$ are exact Lagrangian cobordisms in the symplectization of $\R^{2n+1}$, we
glue the positive end of $L_{\Lambda_{-}}$ to the negative
end of $L$ and get an exact Lagrangian filling $L_{\Lambda_{+}}$ of $\Lambda_{+}$.

Consider the Mayer-Vietoris long exact sequence for $L_{\Lambda_{-}}$, $L$ and $L_{\Lambda_{+}}$
\begin{align}\label{longexseqmavjethefirstone}
\dots \to H_{i}(\Lambda_{-}; \Z_{2})\to H_{i}(L; \Z_{2})\oplus H_{i}(L_{\Lambda_{-}}; \Z_{2})\xrightarrow{\Phi} H_{i}(L_{\Lambda_{+}}; \Z_{2}) \to \dots.
\end{align}
From Formulas~\ref{diffdimshicobandendleg} and \ref{longexseqmavjethefirstone} it follows that
\begin{align*}
\dim(H_{i}(L_{\Lambda_{+}};\Z_{2})) &\geq \dim(\Phi(H_{i}(L; \Z_{2})\oplus H_{i}(L_{\Lambda_{-}}; \Z_{2}))) \\
&\geq  \dim(H_{i}(L_{\Lambda_{-}};\Z_{2})) + \dim(H_{i}(L;\Z_{2})) - \dim(H_{i}(\Lambda_{-}; \Z_{2}))\nonumber \\
&> \dim(H_{i}(L_{\Lambda_{-}};\Z_{2})).
\end{align*}
Hence we get that
\begin{align}\label{finalineqfrlefttorthilam}
\dim(H_{i}(L_{\Lambda_{+}};\Z_{2}))>
\dim(H_{i}(L_{\Lambda_{-}};\Z_{2})).
\end{align}

We now define the following notations. If $\Lambda$ is a closed, orientable Legendrian submanifold of $\R^{2n+1}$, then
we denote by $\L(\Lambda)$ the set of all embedded exact Lagrangian fillings of $\Lambda$ and
\begin{align*}
\mathcal H^{geom}_{\Lambda}:=\{ (\dim(H_{i}(L_{\Lambda}; \Z_{2})))_{i}: L_{\Lambda}\in \L(\Lambda)\}.
\end{align*}

Here we remind the reader of the following isomorphism described by Ekholm in~\cite{Ekholm}, which comes from certain observations of Seidel in wrapped Floer homology~\cite{AbouzaidSeidel}, \cite{FukayaSeidelSmith}. Note that the existence of this isomorphism has been proven for $n=1$ by Ekholm, Honda and Kalman in~\cite{EkholmHondaKalman} and for all $n$ by Dimitroglou Rizell in~\cite{Rizell0}.

\begin{theorem}[\cite{Ekholm}, \cite{EkholmHondaKalman}, \cite{Rizell0}]\label{bigproblhopenotcomplproven}
Let $\Lambda$ be a closed, orientable Legendrian submanifold of $\R^{2n+1}$ and $\emptyset\prec^{ex}_{L_{\Lambda}}\Lambda$. Then
\begin{align*}
H_{n-i}(L_{\Lambda}; \Z_{2})\simeq LCH_{\ve}^{i}(\Lambda).
\end{align*}
Here $\ve$ is the augmentation induced by $L_{\Lambda}$.
\end{theorem}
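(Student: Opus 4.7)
The plan is to realise both sides of the claimed isomorphism as pieces of a single wrapped-Floer-type cochain complex on $L_{\Lambda}$, to collapse one piece via a vanishing theorem, and finally to convert the resulting identification into the stated form using Lefschetz duality. More concretely, fix a Morse function $f\colon L_{\Lambda}\to \R$ which is proper and has no critical points on the cylindrical end, together with a Hamiltonian $H$ on the symplectization that grows linearly in the $t$-coordinate at infinity, so that its time-$1$ flow ``wraps'' $L_{\Lambda}$ around itself via the Reeb flow on $\R^{2n+1}$. The resulting wrapped cochain complex $CW^{\ast}(L_{\Lambda})$ has two types of generators: interior critical points of $f$ (Morse generators) and Hamiltonian chords at infinity, which after a standard Morse-Bott/SFT identification are in bijection with the Reeb chords of $\Lambda$.

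Filtering by action separates the two types of generators into a short exact sequence of cochain complexes
\begin{align*}
0 \to CM^{\ast}(L_{\Lambda},\Lambda; f) \to CW^{\ast}(L_{\Lambda}) \to LC_{\ve}(\Lambda) \to 0,
\end{align*}
where the quotient is the linearized Legendrian contact cochain complex of $\Lambda$ with respect to the augmentation $\ve$ induced by $L_{\Lambda}$ (the augmentation which counts rigid holomorphic disks in $L_{\Lambda}$ with a single positive Reeb-chord puncture and no negative punctures), and where the Morse subcomplex computes $H^{\ast}(L_{\Lambda},\Lambda; \Z_{2})$ because $f$ is proper and increasing toward infinity. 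Passing to cohomology yields a long exact sequence
\begin{align*}
\cdots \to H^{i}(L_{\Lambda},\Lambda;\Z_{2}) \to WH^{i}(L_{\Lambda}) \to LCH^{i}_{\ve}(\Lambda) \to H^{i+1}(L_{\Lambda},\Lambda;\Z_{2}) \to \cdots.
\end{align*}

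The main input is then the Abouzaid-Seidel-style vanishing $WH^{\ast}(L_{\Lambda})=0$: an exact Lagrangian filling sitting in the symplectization of $\R^{2n+1}$ is displaceable from itself via the Liouville flow followed by a small Hamiltonian perturbation, and this forces its wrapped Floer cohomology to vanish. Combined with the long exact sequence one obtains $LCH^{i}_{\ve}(\Lambda) \cong H^{i+1}(L_{\Lambda},\Lambda;\Z_{2})$, and Lefschetz duality on the compact $(n+1)$-manifold with boundary $(L_{\Lambda},\Lambda)$ then yields $H^{i+1}(L_{\Lambda},\Lambda;\Z_{2}) \cong H_{n-i}(L_{\Lambda};\Z_{2})$, which is exactly the stated isomorphism.

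The main obstacle is analytic rather than algebraic. One has to construct and study moduli spaces of pseudoholomorphic disks in the symplectization with boundary on $L_{\Lambda}$ and its pushoff carrying both Morse-flow and Reeb-chord asymptotic data, and to establish the appropriate transversality, SFT-type compactness, and gluing so that the chain-level identification of the quotient complex with $LC_{\ve}(\Lambda)$ and of the subcomplex with the Morse complex of $(L_{\Lambda},\Lambda)$ is honest; one must also rigorously justify the vanishing of $WH^{\ast}(L_{\Lambda})$ in the SFT framework. This is precisely the technical content carried out by Ekholm-Honda-Kalman for $n=1$ and by Dimitroglou Rizell in arbitrary dimension, and it is the hard part of the argument.
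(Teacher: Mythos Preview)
The paper does not prove this theorem; it is quoted as an external result, attributed to Ekholm \cite{Ekholm} (with origins in Seidel's wrapped-Floer ideas \cite{AbouzaidSeidel}, \cite{FukayaSeidelSmith}), with the analytic foundations supplied by Ekholm--Honda--K\'{a}lm\'{a}n \cite{EkholmHondaKalman} for $n=1$ and by Dimitroglou Rizell \cite{Rizell0} in general. There is therefore no ``paper's own proof'' to compare your proposal against.

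That said, your sketch is an accurate summary of the argument as it appears in those references: the action-filtration long exact sequence relating Morse cohomology of the filling, wrapped Floer cohomology, and linearized Legendrian contact cohomology; the vanishing $WH^{\ast}(L_{\Lambda})=0$ coming from displaceability in the symplectization of $\R^{2n+1}$; and the final Lefschetz-duality step. You also correctly identify that the genuine work lies in the analytic foundations (transversality, SFT compactness, gluing) rather than in the formal algebra, and that this is exactly what the cited papers provide. In the context of the present paper, the appropriate ``proof'' is simply the citation, and your outline would be appropriate as an expository remark explaining why the cited isomorphism holds.
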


\begin{remark}\label{stbutndchofgrsmaslov}
Note that in order for gradings in Theorem \ref{bigproblhopenotcomplproven} to be well defined, one should assume that the Maslov class of $L_{\Lambda}$ is trivial and the Maslov number of $\Lambda$ is zero. If one does not make these assumptions, then from discussion in \cite{Rizell0} it follows that the following formula holds
\begin{align*}
\dim(\ker(\partial_{\ve})) - \dim(Im (\partial_{\ve})) = \sum_{i}
\dim (H_{i}(L_{\Lambda}; \Z_{2})).
\end{align*}
Here $\partial_{\ve}$ is a differential of $LC_{\ve}(\Lambda)$.
\end{remark}

From
the fact that Legendrian isotopy implies that there exists an exact Lagrangian cylinder, see Proposition 1.4 in \cite{Golovko}, it follows that $\mathcal H^{geom}_{\Lambda}$ is a Legendrian invariant.

Note that Formula~\ref{finalineqfrlefttorthilam} holds
for every exact Lagrangian filling $L_{\Lambda_{+}}$ of $\Lambda_{+}$ obtained by gluing the positive end of an exact Lagrangian filling $L_{\Lambda_{-}}$ to the negative end
of $L$.

Since $\Lambda_{-}$ is chord generic, every linearized Legendrian contact cohomology
complex of $\Lambda_{-}$ has the same (finite) number of generators.  Therefore using that $\emptyset\prec^{ex}_{L_{\Lambda_{-}}}\Lambda_{-}$, Theorem~\ref{bigproblhopenotcomplproven} and Remark~\ref{stbutndchofgrsmaslov}
we get that there is $L^{max}_{\Lambda_{-}}\in \L(\Lambda_{-})$ such that
\begin{align}\label{maxiforhomolgybothends}
\dim(H_{i}(L^{max}_{\Lambda_{-}}; \Z_{2}))\geq \dim(H_{i}(L_{\Lambda_{-}}; \Z_{2}))
\end{align}
for all $L_{\Lambda_{-}}\in \L(\Lambda_{-})$.
Then we construct $L^{sep}_{\Lambda_{+}}$ which is an exact Lagrangian cobordism obtained by gluing the positive end of $L^{max}_{\Lambda_{-}}$ to the negative
end of $L$. Formulas~\ref{finalineqfrlefttorthilam} and \ref{maxiforhomolgybothends} imply that
\begin{align*}
\dim(H_{i}(L^{sep}_{\Lambda_{+}};\Z_{2}))>\dim(H_{i}(L^{max}_{\Lambda_{-}}; \Z_{2}))
\end{align*}
and hence
$\mathcal H^{geom}_{\Lambda_{-}}\neq \mathcal H^{geom}_{\Lambda_{+}}$. Thus $\Lambda_{-}$ is not Legendrian isotopic to $\Lambda_{+}$. This finishes the proof of the first part of Proposition~\ref{legnotisotspinunspinhom}.

Consider $\Sigma_{S^{i_{k}}}\dots\Sigma_{S^{i_{1}}} L$ and $\Sigma_{S^{i_{k}}}\dots\Sigma_{S^{i_{1}}} \Lambda_{-}$, where $i_{1},\dots,i_{k}
\geq i$ and $i$ is the smallest number such that $\dim (H_{i}(L;\Z_{2}))>\dim(H_{i}(\Lambda_{-}; \Z_{2}))$.
Observe that $\Sigma_{S^{i_{k}}}\dots\Sigma_{S^{i_{1}}} L$ is diffeomorphic to $L\times S^{i_{1}}\times \dots\times S^{i_{k}}$ and $\Sigma_{S^{i_{k}}}\dots\Sigma_{S^{i_{1}}} \Lambda_{-}$ is diffeomorphic to $\Lambda_{-}\times S^{i_{1}}\times \dots\times S^{i_{k}}$.  Hence
\begin{align}\label{homiintermcobismwithdiffis}
&H_{i}(\Sigma_{S^{i_{k}}}\dots\Sigma_{S^{i_{1}}} L; \Z_{2})\simeq H_{i}(\Sigma_{S^{i_{k-1}}}\dots\Sigma_{S^{i_{1}}} L; \Z_{2})\oplus H_{i-i_{k},\ 0}(\Sigma_{S^{i_{k-1}}}\dots\Sigma_{S^{i_{1}}} L; \Z_{2})\\
&\simeq H_{i}(L;\Z_{2})\oplus H_{i-i_{1},\ 0}(L;\Z_{2})\oplus\dots\oplus H_{i-i_{k},\ 0}(\Sigma_{S^{i_{k-1}}}\dots\Sigma_{S^{i_{1}}} L; \Z_{2})\simeq H_{i}(L;\Z_{2})\oplus \Z_{2}^{l},\nonumber
\end{align}
where
\begin{align*}
H_{i-i_{j},\ 0}(\ \cdot \ ; \Z_{2})\simeq
\left \{
\begin{array}{ll}
H_{i-i_{j}}(\ \cdot \ ; \Z_{2}), & \mbox{if}\ i-i_{j}\geq 0;\ \\
0, & \mbox{if}\ i-i_{j}<0
\end{array}
\right.
\end{align*}
and $l$ is the number of $j$'s such that $i_{j}=i$.
Similarly, we see that
\begin{align}\label{homsigmaendsagdiffpl}
H_{i}(\Sigma_{S^{i_{k}}}\dots\Sigma_{S^{i_{1}}} \Lambda_{-}; \Z_{2})&\simeq H_{i}(\Lambda_{-};\Z_{2})\oplus \Z_{2}^{l},
\end{align}
where $l$ is the number of $j$'s such that $i_{j}=i$. Observe that here we use that $\Lambda_{-}$ and $L$ are connected.
Formulas~\ref{diffdimshicobandendleg}, \ref{homiintermcobismwithdiffis} and \ref{homsigmaendsagdiffpl} imply that
\begin{align}\label{serineqaftliftwithiscond}
\dim(H_{i}(\Sigma_{S^{i_{k}}}\dots\Sigma_{S^{i_{1}}} L; \Z_{2})) &= \dim(H_{i}(L;\Z_{2})) + l > \dim(H_{i}(\Lambda_{-};\Z_{2})) + l\\
&= \dim(H_{i}(\Sigma_{S^{i_{k}}}\dots\Sigma_{S^{i_{1}}} \Lambda_{-}; \Z_{2})) \nonumber.
\end{align}
Observe that from the proof of Proposition~\ref{liftofcobordismsexandjustlagr} it follows that
\begin{align*}
\Sigma_{S^{i_{k}}}\dots\Sigma_{S^{i_{1}}} \Lambda_{-} \prec^{ex}_{\Sigma_{S^{i_{k}}}\dots\Sigma_{S^{i_{1}}} L} \Sigma_{S^{i_{k}}}\dots\Sigma_{S^{i_{1}}} \Lambda_{+}.
\end{align*}

We now take a chord generic representative $(\Sigma_{S^{i_{k}}}\dots\Sigma_{S^{i_{1}}} \Lambda_{\pm})_{gen}$ in the Legendrian isotopy class of $\Sigma_{S^{i_{k}}}\dots\Sigma_{S^{i_{1}}} \Lambda_{\pm}$. Note that from the proof of Proposition 1.4 in \cite{Golovko} it follows that there are exact Lagrangian cobordisms $L^{(\Sigma_{S^{i_{k}}}\dots\Sigma_{S^{i_{1}}} \Lambda_{\pm})_{gen}}_{\Sigma_{S^{i_{k}}}\dots\Sigma_{S^{i_{1}}} \Lambda_{\pm}}$ from $\Sigma_{S^{i_{k}}}\dots\Sigma_{S^{i_{1}}} \Lambda_{\pm}$ to $(\Sigma_{S^{i_{k}}}\dots\Sigma_{S^{i_{1}}} \Lambda_{\pm})_{gen}$ and $L_{(\Sigma_{S^{i_{k}}}\dots\Sigma_{S^{i_{1}}} \Lambda_{\pm})_{gen}}^{\Sigma_{S^{i_{k}}}\dots\Sigma_{S^{i_{1}}} \Lambda_{\pm}}$ from $(\Sigma_{S^{i_{k}}}\dots\Sigma_{S^{i_{1}}} \Lambda_{\pm})_{gen}$ to $\Sigma_{S^{i_{k}}}\dots\Sigma_{S^{i_{1}}} \Lambda_{\pm}$ that are diffeomorphic to $\R\times \Sigma_{S^{i_{k}}}\dots\Sigma_{S^{i_{1}}} \Lambda_{\pm}$.
Hence we use the fact that $\emptyset\prec_{\Sigma_{S^{i_{k}}}\dots\Sigma_{S^{i_{1}}} L_{\Lambda_{-}}}^{ex} \Sigma_{S^{i_{k}}}\dots\Sigma_{S^{i_{1}}} \Lambda_{-}$, glue $L^{(\Sigma_{S^{i_{k}}}\dots\Sigma_{S^{i_{1}}} \Lambda_{-})_{gen}}_{\Sigma_{S^{i_{k}}}\dots\Sigma_{S^{i_{1}}} \Lambda_{-}}$ to $\Sigma_{S^{i_{k}}}\dots\Sigma_{S^{i_{1}}} L_{\Lambda_{-}}$ along $\Sigma_{S^{i_{k}}}\dots\Sigma_{S^{i_{1}}}\Lambda_{-}$ and get $ L_{(\Sigma_{S^{i_{k}}}\dots\Sigma_{S^{i_{1}}}\Lambda_{-})_{gen}}$ which is an exact Lagrangian filling  of $(\Sigma_{S^{i_{k}}}\dots\Sigma_{S^{i_{1}}} \Lambda_{-})_{gen}$. In addition, if we glue $\Sigma_{S^{i_{k}}}\dots\Sigma_{S^{i_{1}}} L$ to $L_{(\Sigma_{S^{i_{k}}}\dots\Sigma_{S^{i_{1}}} \Lambda_{-})_{gen}}^{\Sigma_{S^{i_{k}}}\dots\Sigma_{S^{i_{1}}} \Lambda_{-}}$
along $\Sigma_{S^{i_{k}}}\dots\Sigma_{S^{i_{1}}} \Lambda_{-}$ and to
$L^{(\Sigma_{S^{i_{k}}}\dots\Sigma_{S^{i_{1}}} \Lambda_{+})_{gen}}_{\Sigma_{S^{i_{k}}}\dots\Sigma_{S^{i_{1}}} \Lambda_{+}}$
along $\Sigma_{S^{i_{k}}}\dots\Sigma_{S^{i_{1}}} \Lambda_{+}$, then
we get an exact Lagrangian cobordism $L_{(\Sigma_{S^{i_{k}}}\dots\Sigma_{S^{i_{1}}} \Lambda_{-})_{gen}}^{(\Sigma_{S^{i_{k}}}\dots\Sigma_{S^{i_{1}}} \Lambda_{+})_{gen}}$ from $(\Sigma_{S^{i_{k}}}\dots\Sigma_{S^{i_{1}}} \Lambda_{-})_{gen}$ to $(\Sigma_{S^{i_{k}}}\dots\Sigma_{S^{i_{1}}} \Lambda_{+})_{gen}$ which is diffeomorphic to $\Sigma_{S^{i_{k}}}\dots\Sigma_{S^{i_{1}}} L$.
From Formula~\ref{serineqaftliftwithiscond} it follows that
\begin{align}
\dim(H_{i}(L_{(\Sigma_{S^{i_{k}}}\dots\Sigma_{S^{i_{1}}} \Lambda_{-})_{gen}}^{(\Sigma_{S^{i_{k}}}\dots\Sigma_{S^{i_{1}}} \Lambda_{+})_{gen}}; \Z_{2})) >  \dim(H_{i}((\Sigma_{S^{i_{k}}}\dots\Sigma_{S^{i_{1}}} \Lambda_{-})_{gen}; \Z_{2})) \nonumber.
\end{align}
Hence the first part of Proposition~\ref{legnotisotspinunspinhom} implies that
$(\Sigma_{S^{i_{k}}}\dots\Sigma_{S^{i_{1}}} \Lambda_{-})_{gen}$ is not Legendrian isotopic to $(\Sigma_{S^{i_{k}}}\dots\Sigma_{S^{i_{1}}} \Lambda_{+})_{gen}$. Therefore
$\Sigma_{S^{i_{k}}}\dots\Sigma_{S^{i_{1}}} \Lambda_{-}$ is not Legendrian isotopic to $\Sigma_{S^{i_{k}}}\dots\Sigma_{S^{i_{1}}} \Lambda_{+}$.
This finishes the proof of Proposition~\ref{legnotisotspinunspinhom}.

\section{Proof of Theorem~\ref{thexamplessisposdifornotnlis}}
We first prove the following simple lemma:
\begin{lemma}\label{clasprosofrotnnonstdbutstag}
Let $\Lambda$ be a  closed, orientable Legendrian submanifold of $\R^{2n+1}$ such that $\emptyset\prec^{ex}_{L_{\Lambda}}\Lambda$.
Then $\Sigma_{S^m} \Lambda$ has
\begin{itemize}
\item[(1)] the topological type of $\Lambda\times S^m$,
\item[(2)] the rotation class of $\Sigma_{S^m} \Lambda$ is determined by the rotation class of $\Lambda$, and
\item[(3)] the Thurston-Bennequin number
\begin{align*}
tb(\Sigma_{S^m}\Lambda)=
\left \{
\begin{array}{ll}
2(-1)^{\frac{m}{2}}tb(\Lambda), & \mbox{if}\ m\ \mbox{is even};\ \\
0, & \mbox{if}\ m\ \mbox{is odd}.
\end{array}
\right.
\end{align*}
\end{itemize}
\end{lemma}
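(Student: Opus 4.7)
For part (1), the map $f_{\Sigma_{S^m}\Lambda}:\Lambda\times S^m\to\R^{2(n+m)+1}$ of Section~\ref{constrsmfrspinfd} is by construction defined on $\Lambda\times S^m$, with $(\theta,\phi_1,\dots,\phi_{m-1})$ the standard spherical coordinates on $S^m$; and we already observed near Formula~\ref{parammends} that the assumption $x_1(p)>0$ forces it to be an embedding. Hence $\Sigma_{S^m}\Lambda$ is smoothly $\Lambda\times S^m$ with no further work.

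For part (2), the plan is to show that the complex bundle map $(f,df_{\C}):T\Sigma_{S^m}\Lambda\otimes\C\to\xi_{2(n+m)+1}$ defining $r(\Sigma_{S^m}\Lambda)$ factors, up to canonical homotopy, through the corresponding map for $\Lambda$. Differentiating Formulas~\ref{longxs} and \ref{longys} at $(p,\theta,\overline{\phi})$, the $\Lambda$-directions produce a complex frame of the old $\xi_{2n+1}$ sitting inside $\xi_{2(n+m)+1}$, while the $S^m$-directions $(\theta,\overline{\phi})$ span an additional real $m$-plane whose position depends only on $(x_1(p),y_1(p))$ and on the angular data. Complexifying this latter contribution fills in the extra block via a classifying map that is canonical in the $\Lambda$-data, so the homotopy class of the whole bundle map is determined by the rotation class of $\Lambda$.

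For part (3), I would use Remark~\ref{thmfiltb}: the exact Lagrangian filling $L_\Lambda$ of $\Lambda$ produces an exact Lagrangian filling $L_{\Sigma_{S^m}\Lambda}$ of $\Sigma_{S^m}\Lambda$ that is diffeomorphic to $L_\Lambda\times S^m$. Combining this with the standard relation between $tb$ and $\chi$ for a Legendrian bounding an exact Lagrangian filling (available precisely because of the hypothesis $\emptyset\prec^{ex}_{L_\Lambda}\Lambda$; compare Theorem~\ref{bigproblhopenotcomplproven} and Remark~\ref{stbutndchofgrsmaslov}), the product formula
\begin{align*}
\chi(L_{\Sigma_{S^m}\Lambda})=\chi(L_\Lambda)\cdot\chi(S^m)=\chi(L_\Lambda)\bigl(1+(-1)^m\bigr)
\end{align*}
yields $tb(\Sigma_{S^m}\Lambda)=0$ when $m$ is odd and magnitude $2\,|tb(\Lambda)|$ when $m$ is even. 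The sign $(-1)^{m/2}$ is then extracted by comparing the sign in the $tb$/Euler-characteristic relation in ambient dimensions $2n+1$ and $2(n+m)+1$; one expects a $-1$ per pair of real coordinates added, since each full round of spinning enlarges the ambient complex dimension by one.

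The cleanest parts are (1) and the odd-$m$ half of (3). The main obstacle is the sign $(-1)^{m/2}$ in part (3). Either via the filling route above, where one has to track the ambient-dimension dependence of the $tb$/$\chi$ formula, or via the alternative approach of performing a Morse--Bott perturbation on the non-generic Reeb-chord families that spinning creates and then doing a signed count of the perturbed chords, the substantive work is the sign bookkeeping rather than any new geometric input.
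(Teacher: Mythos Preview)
Your approach matches the paper's: parts (1) and (2) are declared to follow directly from the construction, and part (3) is proved via the spun filling $L_{\Sigma_{S^m}\Lambda}\simeq L_\Lambda\times S^m$ from Remark~\ref{thmfiltb} together with the $tb$/Euler-characteristic relation applied in ambient dimensions $2n+1$ and $2(n+m)+1$. The only correction is that the $tb$/$\chi$ formula is not a consequence of Theorem~\ref{bigproblhopenotcomplproven} or Remark~\ref{stbutndchofgrsmaslov} (those concern the Seidel isomorphism, not $tb$); the paper instead quotes the explicit signed formula $tb(\Lambda)=(-1)^{n/2+1}\chi(L_\Lambda)$ for $n$ even and $tb(\Lambda)=(-1)^{(n-2)(n-1)/2+1}\chi(L_\Lambda)$ for $n$ odd from Remark~3.5 of \cite{Golovko}, which turns your heuristic ``$-1$ per added complex coordinate'' into a one-line computation.
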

\begin{proof}
We first observe that $(1)$ and $(2)$ are straightforward and follow from the construction of $\Sigma_{S^m} \Lambda$.
Then we prove $(3)$. Note that $L_{\Sigma_{S^m}\Lambda}:= \Sigma_{S^m}L_{\Lambda}$ is diffeomorphic to $L_{\Lambda}\times S^m$. Hence  we get
\begin{align}\label{eulcharsnrotexlagrcob}
\chi(L_{\Sigma_{S^m}\Lambda})=\chi(L_{\Lambda}\times S^m)=\chi(L_{\Lambda})\chi(S^m)=\left \{
\begin{array}{ll}
2\chi(L_{\Lambda}), & \mbox{if}\ m\ \mbox{is even};\\
0, & \mbox{if}\ m\ \mbox{is odd} .
\end{array}
\right.
\end{align}
We now recall that
\begin{align}\label{oldtbfingothpap}
&tb(\Lambda)=
\left \{
\begin{array}{ll}
(-1)^{\frac{n}{2}+1}\chi(L_{\Lambda}), & \mbox{if}\ n\ \mbox{is even};\\
(-1)^{\frac{(n-2)(n-1)}{2}+1}\chi(L_{\Lambda}), & \mbox{if}\ n\ \mbox{is odd},
\end{array}
\right.
\end{align}
see Remark 3.5 in \cite{Golovko}.
Formulas  \ref{eulcharsnrotexlagrcob} and \ref{oldtbfingothpap} imply that
\begin{align}
tb(\Sigma_{S^m}\Lambda)&=
\left \{
\begin{array}{ll}
(-1)^{\frac{m+n}{2}+1}\chi(L_{\Sigma_{S^m}\Lambda}), & \mbox{if}\ n, m\ \mbox{are even};\\
(-1)^{\frac{(n+m-2)(n+m-1)}{2}+1}\chi(L_{\Sigma_{S^m}\Lambda}), & \mbox{if}\ n\ \mbox{is odd}, m\ \mbox{is even};\\
0, & \mbox{if}\ m\ \mbox{is odd};
\end{array}
\right.\\ \nonumber
&= \left \{
\begin{array}{ll}
2(-1)^{\frac{m}{2}}tb(\Lambda), & \mbox{if}\ m\ \mbox{is even};\ \\
0, & \mbox{if}\ m\ \mbox{is odd}.
\end{array}
\right.
\end{align}
\end{proof}

\begin{figure}[t]
\centering
\includegraphics[width=250pt]{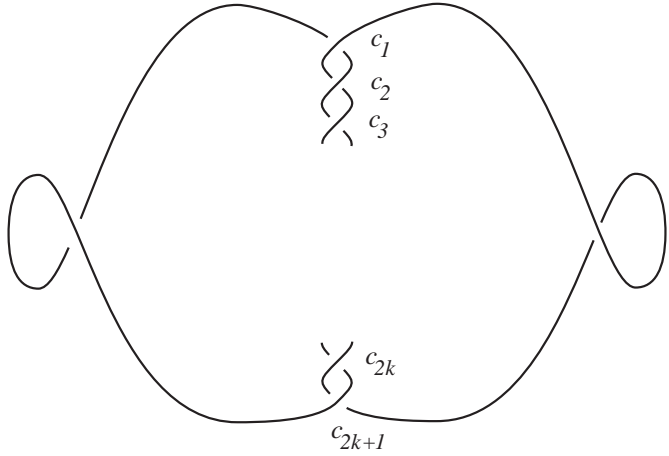}
\caption{The knot $T_{2k+1}$.}
\label{nonlegisknotsexamples}
\end{figure}

Let $T_{2k+1}$ be the Legendrian torus knot whose Lagrangian projection is in Figure~\ref{nonlegisknotsexamples} with rotation number $r(T_{2k+1})=0$ for $k\geq 1$. Note that $T_{2k+1}$ admits an exact Lagrangian filling for every $k\geq 1$, see Section 8.1 in \cite{EkholmHondaKalman}.
In addition, recall that $T_{2j+1}\prec_{L^{2k+1}_{2j+1}}^{ex}T_{2k+1}$ for $k>j$, see the proof of Proposition 1.6 in \cite{Golovko} or Section 8.1 in \cite{EkholmHondaKalman}. It is easy to see that $tb(T_{2k+1})=2k-1$.  We use Theorem 1.2 from \cite{Chantraine} and get that
\begin{align}
 2(k-j) = tb(T_{2k+1}) - tb(T_{2j+1}) = - \chi (L^{2k+1}_{2j+1}) = 2g(L^{2k+1}_{2j+1}).
\end{align}
Therefore
$L^{2k+1}_{2j+1}$ is a twice punctured genus $g(L^{2k+1}_{2j+1})=k-j$ oriented surface. Hence we see that
\begin{align*}
\dim (H_{1}(L^{2k+1}_{2j+1}; \Z_{2})) = 2(k-j)+1>1=\dim (H_{1}(T_{2j+1}; \Z_{2}))
\end{align*}
for $k>j$.

We then apply Propositions~\ref{liftofcobordismsexandjustlagr} and \ref{legnotisotspinunspinhom} and get that there are infinitely many pairs of exact Lagrangian cobordant  and not pairwise Legendrian isotopic  Legendrian $S^1\times S^{i_{1}}\times \dots \times S^{i_{k}}$ in $\R^{2(\sum^{k}_{j=1}i_{j}+1)+1}$.
In addition, from Lemma~\ref{clasprosofrotnnonstdbutstag} it follows that the classical invariants of $S^1\times S^{i_{1}}\times \dots \times S^{i_{k}}$'s agree if one of $i_{j}$'s is odd.
This finishes the proof of Theorem~\ref{thexamplessisposdifornotnlis}.

\section{Alternative approach}\label{genfamaltappr}
One can use generating family cohomology (over $\Z_{2}$) instead of linearized Legendrian contact cohomology to prove a variant of Proposition~\ref{legnotisotspinunspinhom} (for the basic definitions of the theory of generating families
we refer to \cite{SabloffTraynor}):
\begin{proposition}\label{genfampropagliftandgen}
Let $\Lambda_{-}$ and  $\Lambda_{+}$ be two closed, orientable Legendrian submanifolds of  $J^1(M)$ such that $\emptyset \prec^{lag}_{L_{\Lambda_{-}}} \Lambda_{-}$,
$\Lambda_{-}\prec^{lag}_{L} \Lambda_{+}$ and $\dim (H_{i}(L;\Z_{2}))>\dim(H_{i}(\Lambda_{-}; \Z_{2}))$ for some $i$. In addition, assume that $L_{\Lambda_{-}}$ admits a tame, compatible triple of generating families $(F_{L_{\Lambda_{-}}}, f_{\emptyset}^{-}, f_{\Lambda_{-}}^{+})$ and $L$ is gf-compatible to $\Lambda_{-}$. Then
$\Lambda_{-}$ is not Legendrian isotopic to $\Lambda_{+}$.
\end{proposition}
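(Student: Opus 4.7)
The plan is to mirror the proof of Proposition~\ref{legnotisotspinunspinhom}, but with linearized Legendrian contact cohomology replaced by generating family cohomology. The role of Ekholm's isomorphism in Theorem~\ref{bigproblhopenotcomplproven} is played by the Sabloff--Traynor type duality between generating family cohomology of $\Lambda$ and the singular (co)homology of a Lagrangian filling carrying a compatible generating family. In particular, this duality tells us that, once a compatible generating family is fixed on the filling, the rank of the generating family cochain complex of $\Lambda_{-}$ controls (and in fact bounds) $\dim H_\ast(L_{\Lambda_{-}};\Z_2)$ summed over degrees.

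The first step is to produce a Lagrangian filling of $\Lambda_{+}$ that again carries a compatible tame generating family. The hypotheses that $L_{\Lambda_{-}}$ admits a tame, compatible triple $(F_{L_{\Lambda_{-}}}, f_{\emptyset}^{-}, f_{\Lambda_{-}}^{+})$ and that $L$ is gf-compatible to $\Lambda_{-}$ are tailored precisely so that we can concatenate the generating family at infinity on $L_{\Lambda_{-}}$ with the one on the negative end of $L$; after standard interpolation this yields a tame, compatible generating family on the glued filling $L_{\Lambda_{+}} := L_{\Lambda_{-}} \cup_{\Lambda_{-}} L$. Then, exactly as in the proof of Proposition~\ref{legnotisotspinunspinhom}, the Mayer--Vietoris long exact sequence for the decomposition $L_{\Lambda_{+}} = L_{\Lambda_{-}} \cup_{\Lambda_{-}} L$ combined with the hypothesis $\dim H_i(L;\Z_2) > \dim H_i(\Lambda_{-};\Z_2)$ gives
\begin{align*}
\dim H_i(L_{\Lambda_{+}};\Z_2) > \dim H_i(L_{\Lambda_{-}};\Z_2).
\end{align*}

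Next, I would introduce the Legendrian invariant
\begin{align*}
\mathcal H^{geom, gf}_{\Lambda} := \{ (\dim H_i(L_{\Lambda};\Z_2))_i : L_{\Lambda} \text{ is an exact Lagrangian filling of } \Lambda \text{ carrying a compatible generating family}\},
\end{align*}
whose invariance under Legendrian isotopy follows from the standard fact that generating families can be continued along Legendrian isotopies (and fillings extended by trivial cylinders accordingly). The Sabloff--Traynor duality, together with the fact that the generating family cochain complex for a chord generic representative has finitely many generators, provides a uniform upper bound on $\dim H_i(L_{\Lambda_{-}};\Z_2)$ as $L_{\Lambda_{-}}$ ranges over compatibly-gf fillings of $\Lambda_{-}$; hence there exists an extremal filling $L^{\max}_{\Lambda_{-}}$ realizing this maximum in degree $i$. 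Gluing $L$ on top of $L^{\max}_{\Lambda_{-}}$ and invoking the inequality above produces a compatibly-gf filling $L^{sep}_{\Lambda_{+}}$ of $\Lambda_{+}$ whose $i$-th Betti number strictly exceeds every $i$-th Betti number appearing in $\mathcal H^{geom,gf}_{\Lambda_{-}}$. Consequently $\mathcal H^{geom,gf}_{\Lambda_{-}} \neq \mathcal H^{geom,gf}_{\Lambda_{+}}$, and $\Lambda_{-}$ is not Legendrian isotopic to $\Lambda_{+}$.

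The main obstacle I anticipate is technical bookkeeping at the interface where generating families are glued: one has to verify that tameness and compatibility persist under the concatenation used to form $L_{\Lambda_{+}}$, and that the Sabloff--Traynor type duality applies in the non-exact Lagrangian filling setting of the proposition (only $\prec^{lag}$ is assumed, not $\prec^{ex}$). If the duality is only available for exact fillings, the statement should be interpreted accordingly, or one should appeal to a version of the duality that requires only the existence of a compatible generating family rather than exactness; this is exactly why the extra hypotheses on $(F_{L_{\Lambda_{-}}}, f_{\emptyset}^{-}, f_{\Lambda_{-}}^{+})$ and gf-compatibility of $L$ are included.
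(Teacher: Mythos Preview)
Your proposal is correct and follows essentially the same approach as the paper: mimic the proof of Proposition~\ref{legnotisotspinunspinhom}, replace Theorem~\ref{bigproblhopenotcomplproven} by the Sabloff--Traynor isomorphism $GH^{k}(f_{\Lambda}^{+})\simeq H^{k+1}(L_{\Lambda},\Lambda;\Z_{2})$, glue gf-compatible fillings as in \cite{SabloffTraynor}, and obtain the finite upper bound (hence the extremal filling $L^{\max}_{\Lambda_{-}}$) from the fact that the difference function has at most $2l_{-}$ critical points, where $l_{-}$ is the number of Reeb chords of $\Lambda_{-}$. Your concern about exactness is well placed and is exactly why the generating family hypotheses replace it: the Sabloff--Traynor isomorphism needs only a tame compatible generating family on the filling, not exactness.
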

Here $M$ is a compact manifold (or $\R^n$) and $J^1(M)$ is a $1$-jet space of $M$.
In addition, the property that $L$ is gf-compatible to $\Lambda_{-}$ means that for every tame generating family $f_{\Lambda_{-}}$ of $\Lambda_{-}$ there exists a tame, compatible triple of generating families  $(F_{L}, f^{-}_{\Lambda_{-}}, f^{+}_{\Lambda_{+}})$ for $L$, where $f^{-}_{\Lambda_{-}}$ and $f_{\Lambda_{-}}$ are in the same equivalence class (classes are defined up to stabilizations and fiber-preserving diffeomorphisms; for more details we refer to~\cite{SabloffTraynor}).

One proves Proposition~\ref{genfampropagliftandgen} by simply mimicking the proof of Proposition~\ref{legnotisotspinunspinhom} and using the following observations:
\begin {itemize}
\item[(i)] If we glue a Lagrangian filling $L_{\Lambda_{-}}$ of $\Lambda_{-}$ which admits a tame, compatible triple of generating families $(F_{L_{\Lambda_{-}}}, f^{-}_{\emptyset}, f^{+}_{\Lambda_{-}})$
 to $L$ which is gf-compatible to $\Lambda_{-}$ along $\Lambda_{-}$, then we get a Lagrangian filling $L_{\Lambda_{+}}$ of $\Lambda_{+}$ which admits a tame, compatible triple of generating families $(F_{L_{\Lambda_{+}}}, f^{-}_{\emptyset}, f^{+}_{\Lambda_{+}})$. The way to glue two cobordisms which admit tame, compactible triples of generating families is written in~\cite{SabloffTraynor}.
\item[(ii)]  Sabloff and Traynor in~\cite{SabloffTraynor} proved an  analoque of Theorem~\ref{bigproblhopenotcomplproven}, i.e., they prove that $GH^{k}(f_{\Lambda}^{+})\simeq H^{k+1}(L_{\Lambda}, \Lambda; \Z_{2})$ for a Lagrangian filling $L_{\Lambda}$ of a closed, orientable Legendrian submanifold $\Lambda$ with a  tame, compatible triple of generating families $(F_{L_{\Lambda}}, f^{-}_{\emptyset},f^{+}_{\Lambda})$.
\item[(iii)] $\dim(GH^{i}(f^{+}_{\Lambda_{-}}))\leq 2l_{-}$ for all $i$, where $l_{-}$ is the number of Reeb chords of $\Lambda_{-}$. This follows from the description of the critical points of the difference function, see \cite{FuchsRutherford}, \cite{SabloffTraynor0}. Hence for a fixed $i$ there exists an embedded Lagrangian filling $L^{max}_{\Lambda_{-}}$ of $\Lambda_{-}$  which admits
a tame, compatible triple of generating families $(F_{L^{max}_{\Lambda_{-}}}, f^{-}_{\emptyset, max}, f^{+}_{\Lambda_{-}, max})$
such that
\begin{align*}
\dim(H_{i}(L^{max}_{\Lambda_{-}};\Z_{2}))& = \dim(H^{n-i+1}(L^{max}_{\Lambda_{-}}, \Lambda_{-};\Z_{2})) = \dim(GH^{n-i}(f_{\Lambda_{-}, max}^{+}))\\
&\geq \dim(GH^{n-i}(f_{\Lambda_{-}}^{+})) = \dim(H_{i}(L_{\Lambda_{-}};\Z_{2}))
\end{align*}
for every embedded Lagrangian filling $L_{\Lambda_{-}}$ of $\Lambda_{-}$ which admits a tame, compatible triple of generating families $(F_{L_{\Lambda_{-}}}, f^{-}_{\emptyset}, f^{+}_{\Lambda_{-}})$.
\end{itemize}
\begin{remark}
Observe that Bourgeois, Sabloff and Traynor in \cite{BourgeoisSabloffTraynor} prove that the operation of ``standard'' Lagrangian handle attachment can be realized as a Lagrangian cobordism which is gf-compatible to its negative end. Hence every Lagrangian cobordism obtained by gluing Lagrangian cobordisms which correspond to ``standard'' Lagrangian handle attachments is also gf-compatible to its negative end.
\end{remark}

\section*{Acknowledgements}
The author is deeply grateful to Fr\'{e}d\'{e}ric
Bourgeois, Baptiste Chantraine, Octav Cornea, Tobias Ekholm,
Georgios Dimitroglou Rizell and Joshua Sabloff for helpful
conversations and interest in his work.

\end{document}